\newcommand*\patchAmsMathEnvironmentForLineno[1]{%
  \expandafter\let\csname old#1\expandafter\endcsname\csname #1\endcsname
  \expandafter\let\csname oldend#1\expandafter\endcsname\csname end#1\endcsname
  \renewenvironment{#1}%
     {\linenomath\csname old#1\endcsname}%
     {\csname oldend#1\endcsname\endlinenomath}}%
\newcommand*\patchBothAmsMathEnvironmentsForLineno[1]{%
  \patchAmsMathEnvironmentForLineno{#1}%
  \patchAmsMathEnvironmentForLineno{#1*}}%
\newenvironment{enumeratearabic*}{
\begin{enumerate*}[label=(\arabic*)] %
}{
\end{enumerate*}
}
\newenvironment{enumerateroman*}{
\begin{enumerate*}[label=(\roman*)] %
}{
\end{enumerate*}
}
\numberwithin{equation}{section}
\newtheorem{theoremcounter}{theoremcounter}[section]
\theoremstyle{plain}
\newtheorem{lemma}[theoremcounter]{Lemma}
\newtheorem{theorem}[theoremcounter]{Theorem}
\theoremstyle{plain}
\theoremstyle{definition}
\theoremstyle{remark}
\newtheorem*{mainremarks}{Remarks}
\newtheorem*{mainexample}{Example}
\newcommand{\tx}{\text}
\newcommand{\writelabel}[1]{#1\def\@currentlabel{#1}}
\newcommand{\minwidthmathbox}[2]{%
  \mathmakebox[{\ifdim#1<\width\width\else#1\fi}]{#2}%
}
\newcommand{\rmM}{\ensuremath{\mathrm{M}}}
\newcommand{\defeq}{\mathrel{:=}}
\newcommand{\mrelspace}[1]{\mathrel{\mspace{#1}}}
\NewCommandCopy{\rightarroworig}{\rightarrow}
\renewcommand{\rightarrow}
  {\protect\relbar\mrelspace{-9.7mu}\rightarroworig}
\NewCommandCopy{\leftarroworig}{\leftarrow}
\renewcommand{\leftarrow}
  {\protect\leftarroworig\mrelspace{-9.7mu}\relbar}
\renewcommand{\pmod}[1]{\;(\mathrm{mod}\, #1)}
\newcommand{\SL}[1]{\ensuremath{\mathrm{SL}_{#1}}}
\def\mt{\tilde m}
\def\jt{\tilde j}
\def\zp{\Z_{(p)}}
\def\SL{{\rm SL}}
\def\Z{\mathbb{Z}}
\def\F{\mathbb{F}}
\def\Q{\mathbb{Q}}
\renewcommand\tilde{\widetilde}
\title{Eisenstein series modulo $p^2$}
\author{Scott Ahlgren}
\address{Department of Mathematics\\
University of Illinois\\
Urbana, IL 61801} 
\email{sahlgren@illinois.edu} 
\author{Michael Hanson}
\address{Department of Mathematics\\
University of North Texas\\
Denton, TX 76203, USA}
\email{michael.hanson@unt.edu}
\author{Martin Raum}
\address{%
Chalmers tekniska högskola och G\"oteborgs Universitet,
Institutionen f\"or Matematiska vetenskaper,
SE-412 96 G\"oteborg, Sweden}
\email{martin@raum-brothers.eu}
\author{Olav K. Richter}
\address{Department of Mathematics\\
University of North Texas\\
Denton, TX 76203, USA}
\email{richter@unt.edu}
\thanks{The first author was partially supported by a grant from the Simons Foundation (\#963004 to Scott Ahlgren). The third author was partially supported by Vetenskapsr\aa det Grant~2023-04217.
The fourth author was partially supported by a grant from the Simons Foundation (\#835652 to Olav K. Richter).
}
\subjclass{11F33, 11F11}
\begin{document}

\begin{abstract}
We study congruences for Eisenstein series on $\SL_2(\Z)$ modulo $p^2$, where   $p\geq 5$ is prime.
It is classically known that all   Eisenstein series of weight at least $4$
are determined modulo~$p^2$  by those of weight at most
$p^2-p+2$.  We prove that up to  powers of $E_{p-1}$, each such  Eisenstein series  is  in fact determined modulo $p^2$ by a modular form of weight  at most~$2p-4$.  We also determine $E_2$ modulo $p^2$ in terms of a modular form of weight $p+1$.
\end{abstract}

\maketitle

\setcounter{tocdepth}{2}

\section{Introduction}
\label{sec:introduction}

Let $k\geq 2$ be even and let $B_k$ be the  Bernoulli number.  The Eisenstein series $G_k$ and $E_k$ defined by 
\begin{gather}
\label{eq:def:eisenstein_series}
  G_k
\defeq
  -\frac{B_k}{2k}\,E_k
\defeq
  -\frac{B_k}{2k} + \sum_{n=1}^\infty \sigma_{k-1}(n)\,q^n
\end{gather}
are basic objects  in the theory of modular forms.
It is well known \cite[\S 1]{Serre-p-adic}, \cite[\S3]{SwD-l-adic} that   if $p\geq 5$ is prime, then
\begin{alignat}{3}
\label{eq:modpcong}
    G_k &\equiv G_{k'} \quad&&\pmod p \qquad &\text{if } k &\equiv k' \not\equiv 0 \;\pmod {p-1} \tx{,}
\intertext{that}
\label{eq:ekone}
    E_k &\equiv 1 \quad&&\pmod p \qquad &\text{if } k &\equiv 0 \;\pmod{p-1} \tx{,}
\end{alignat}
and that 
\begin{gather}\label{eq:parEpm1}
    \partial E_{p-1}\equiv E_{p+1}\equiv E_2\quad\pmod p
\end{gather}
(where $\partial$ is the Ramanujan-Serre derivative defined in \eqref{eq:pardef}).

These facts
play a crucial role in the theory of modular forms modulo $p$.  
  Developed  by Swinnerton-Dyer \cite{SwD-l-adic}, this theory and its geometric extension due to Katz~\cite{Katz} has found many applications in the study of modular forms and their Galois representations;   it is  fundamental, for instance, 
to Serre's Conjecture (now a theorem of Khare--Wintenberger~\cite{Khare_Wintenberger_I,Khare_Wintenberger_II} and Kisin~\cite{Kisin}) and to the theory of Eisenstein ideals pioneered by Mazur \cite{Mazur}.

Serre~\cite[\S 1, 2]{Serre-p-adic} investigated $p$-adic properties of Eisenstein series.  In particular, he showed that $G_2=-E_2/24$ is a $p$-adic modular form; in other words, there is a sequence of modular forms  on $\SL_2(\Z)$ whose coefficients converge $p$-adically to those of $G_2$.  More recently, Chen and Kiming \cite[Cor.~2]{chen_kiming} determined an explicit $p$-adic expansion for $G_2$; in particular, they proved 
\cite[Prop.~2]{chen_kiming} that 
\begin{gather}
\label{eq:g2_chen_kiming}
  G_2
\equiv
  G_{2+p(p-1)} + p G_{p+1}^p
  \quad
  \pmod{p^2}
\tx{.}
\end{gather}

The goal of this paper is to obtain (in analogy with \eqref{eq:modpcong}--\eqref{eq:parEpm1} for the modulus $p$) a  description of all Eisenstein series modulo $p^2$ in terms of modular forms of low weight. 
 Before going further we introduce some notation and review what is classically known.   For $k$ even let $\rmM_k$ denote the space of modular forms of weight $k$ on $\SL_{2}(\Z)$ whose Fourier coefficients are rational.  We  identify each $f \in \rmM_k$ with its  Fourier expansion $f = \sum a(n)\, q^n\in \Q[\![q]\!]$.  Let $\zp$ be the ring of $p$-integral rational numbers.  By a $p$-integral modular form we mean a form whose coefficients lie in $\zp$, and by the statement  
$\sum a(n)\, q^n\equiv \sum  b(n)\, q^n \pmod {p^m}$ we mean that $a(n)\equiv b(n) \pmod {p^m}$ for all $n$.

Recall that $G_k \in \rmM_k$ if and only if $k\geq 4$.  Standard properties of Bernoulli numbers (which are recalled in the next section) imply that 
 \begin{gather}\label{eq:gkint}
     G_k \text{ is $p$-integral} \quad \tx{if and only if $(p-1) \nmid k$.}
 \end{gather}
In analogy with~\eqref{eq:modpcong} these properties also imply that if $k, k' \ge 4$,    then 
\begin{gather}\label{eq:Gp2}
  G_{k} \equiv G_{k'} \;\pmod{p^2} \ \ \ \ 
  \text{if $(p-1) \nmid k$\ \ \  and\ \ \  $k\equiv k' \;\pmod{p(p-1)}$}
\tx{.}
\end{gather}
 On the other hand, suppose that 
there is a congruence $f_1 \equiv f_2 \pmod{p^2}$ between 
$p$-integral modular forms $f_1$ and $f_2$ of weights $k_1$
and $k_2$, respectively, where
$f_1, f_2 \not\equiv 0 \pmod{p}$.  Then it is known~\cite[Thm.~1]{Serre-p-adic} that we have 
$k_1\equiv k_2 \pmod{p(p-1)}$.  
Combining these facts, we see that  each $G_k$ with $k\geq 4$ and $(p-1)\nmid k$
is congruent to exactly one Eisenstein series $G_{k_0}$ with $4\leq k_0\leq  2+p(p-1)$.
These facts are natural in view of the  fact that 
\begin{gather}\label{eq:eisp2}
    E_{p(p-1)}\equiv E_{p-1}^p\equiv 1 \quad\pmod{p^2} \tx{.}
\end{gather}
Our main result provides a finer description.    We show that, up to a power of $E_{p-1}$, every such $G_k$ is in fact determined by a modular form whose  weight is in the interval $[4,  2p-4]$.

\begin{theorem}
\label{thm:main:eisenstein}
Let $p \ge 5$ be prime, let~$2 \le k_0 \le p-3$ be even and let~$n \ge 0$. Then there exists a $p$-integral modular form $f_{k_0+(p-1)} \in \rmM_{k_0+(p-1)}$ such that 
\begin{gather}
  G_{k_0+(n+1)(p-1)}
\equiv
  E_{p-1}^n\, f_{k_0+(p-1)}
  \quad
  \pmod{p^2}
\tx{.}
\end{gather}
\end{theorem}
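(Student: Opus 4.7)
The plan is to construct $f_{k_0+(p-1)}$ in the form $G_{k_0+(p-1)} + np\, g$, where $g \in \rmM_{k_0+(p-1)}$ depends only on $k_0$ and $p$ (not on $n$). Since $E_{p-1}^n \equiv 1 \pmod{p}$, expansion gives
\[
  E_{p-1}^n \bigl(G_{k_0+(p-1)} + np\, g\bigr)
\equiv
  E_{p-1}^n G_{k_0+(p-1)} + np\, g \pmod{p^2},
\]
so the theorem reduces to proving the $q$-expansion identity
\[
  G_{k_0+(n+1)(p-1)}
\equiv
  E_{p-1}^n G_{k_0+(p-1)} + np\, g \pmod{p^2}
\]
for an appropriate $g$ in the stated weight.

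To identify $g$, I would expand both sides coefficient by coefficient modulo $p^2$. The von~Staudt--Clausen congruence $p B_{p-1} \equiv -1 \pmod{p}$ gives $1/B_{p-1} \equiv -p \pmod{p^2}$, hence
\[
  E_{p-1}^n
\equiv
  1 - 2np \sum_{m \geq 1} \sigma_{p-2}(m)\, q^m
  \pmod{p^2}.
\]
Setting $K = k_0+(n+1)(p-1)$, the identity $d^{p-1} = 1 + p\, q_p(d)$ for $p \nmid d$, where $q_p(d) = (d^{p-1}-1)/p$ is the Fermat quotient, yields
\[
  \sigma_{K-1}(N) - \sigma_{k_0+p-2}(N)
\equiv
  np \sum_{\substack{d \mid N\\ p \nmid d}} d^{k_0-1}\, q_p(d)
  \pmod{p^2} \qquad (N \geq 1),
\]
while the structure of the Kubota--Leopoldt $p$-adic $L$-function (which is linear in $s$ modulo $p$) gives the parallel constant-term linearity
\[
  \tfrac{B_K}{K} - \tfrac{B_{k_0+(p-1)}}{k_0+(p-1)}
\equiv
  n p \cdot c \pmod{p^2}
\]
for some $c \in \zp / p$ depending only on $k_0$ and $p$. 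Combining these, each coefficient of $G_K - E_{p-1}^n G_{k_0+(p-1)}$ modulo $p^2$ is $np$ times a fixed value independent of $n$. Writing $\Phi$ for this fixed $q$-expansion, one has $\Phi \equiv \tfrac{1}{p}\bigl(G_{k_0+2(p-1)} - E_{p-1} G_{k_0+(p-1)}\bigr) \pmod{p}$, viewed as a mod-$p$ modular form of weight $k_0+2(p-1)$.

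The main obstacle is to show that $\Phi$ has mod-$p$ filtration at most $k_0+(p-1)$, so that $\Phi$ is the reduction modulo $p$ of some $g \in \rmM_{k_0+(p-1)}$. I would approach this either by identifying $\Phi$ explicitly as a concrete linear combination of $G_{k_0+(p-1)}$ and the cusp forms in $\rmM_{k_0+(p-1)}$, verifying the $q$-expansion coincidence up to Sturm's bound; or by a Serre--Swinnerton-Dyer theta-cycle argument excluding each intermediate filtration $k_0+(p-1)+j(p-1)$ with $1 \leq j \leq n+1$. With such a $g$ in hand, the construction $f_{k_0+(p-1)} \defeq G_{k_0+(p-1)} + np\, g$ completes the proof.
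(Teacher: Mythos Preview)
Your reduction of the general case to the single case $n=1$ is correct and is a nice observation that the paper does not make explicit.  The Fermat-quotient computation for the higher coefficients and the Kubota--Leopoldt linearity for the constant term do combine to show that
\[
  G_{k_0+(n+1)(p-1)} - E_{p-1}^n\, G_{k_0+(p-1)} \equiv np\,\Phi \pmod{p^2}
\]
for a fixed mod-$p$ $q$-series $\Phi$, realized as the reduction of $\tfrac{1}{p}\bigl(G_{k_0+2(p-1)} - E_{p-1}G_{k_0+(p-1)}\bigr)$, a form of weight $k_0+2(p-1)$.  So the whole theorem is equivalent to showing that $\Phi$ has mod-$p$ filtration at most $k_0+(p-1)$.

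The gap is that you have not proved this filtration bound, and it is exactly the substantive content of the theorem.  Neither of your proposed methods gives a proof for all $p$.  A Sturm-bound check would let you verify, for a \emph{fixed} prime $p$ and residue class $k_0$, that $\Phi$ lies in $E_{p-1}\cdot\rmM_{k_0+(p-1)}\pmod p$; but you have no candidate $g$ a priori, and even if you guess one from finitely many coefficients, matching to the Sturm bound only certifies that single pair $(p,k_0)$, not the statement uniformly in $p$.  The theta-cycle suggestion is not fleshed out: nothing in the Serre--Swinnerton-Dyer theory by itself forces $\Phi$ to drop a full step in filtration, and ``excluding each intermediate filtration'' is precisely the assertion to be proved, not an argument for it.

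The paper's proof takes a completely different route: it does not isolate the $n=1$ case but instead runs an induction on $n$ using Popa's identities (coming from the Eichler--Shimura period relations), which express $c(k,m)\,G_{2k}$ as $\tfrac{1}{24}\partial G_{2k-2}$ plus an explicit bilinear combination of products $G_{j+1}G_{\tilde\jmath+1}$.  A careful choice of the parameter $m$ makes $c(k,m)$ have $p$-adic valuation exactly $-1$, and the bilinear terms either have low filtration already or match the induction hypothesis after multiplying through by $p$.  This supplies precisely the structural input that your filtration step is missing.
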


\begin{mainremarks}
\begin{enumerate}
\item By \eqref{eq:modpcong} and \eqref{eq:ekone}, we 
always have $f_{k_0+(p-1)}\equiv G_{k_0} \pmod p$.
\item
Recall that  $(p, k_0)$ is an irregular pair if and only if $p$ divides the numerator of $B_{k_0}$ (otherwise we will call it a regular pair).
If~$(p, k_0)$ is a regular pair, then by \eqref{eq:kummercong} the result also holds with $G_{k_0+(n+1)(p-1)}$ replaced by~$E_{k_0+(n+1)(p-1)}$.
\end{enumerate}
\end{mainremarks}

\begin{mainexample}
We give two examples to illustrate Theorem~\ref{thm:main:eisenstein}.
 We consider~$p=37$ and~$n=6$ in Theorem~\ref{thm:main:eisenstein}.
 Let $\Delta$ be the normalized cusp form of weight~$12$.
For the irregular pair $(37, 32)$ a computation proves the following congruence (where the modular form in parenthesis has weight $68=32+36$):
\begin{align*}
  G_{32 + 7 \cdot 36}
&\equiv 
  q + 652q^2 + 68 q^3 + \cdots
\\
&\equiv
  E_{36}^6 \cdot \big(
  E_4^{14} \Delta
  +
  498\, E_4^{11} \Delta^2
  + 
  1310\, E_4^8 \Delta^3
  +
  240\, E_4^5 \Delta^4
  +
  415\, E_4^2 \Delta^5
  \big)
  \quad
  \pmod{37^2}
\tx{.}
\end{align*}
For the regular pair $(37, 2)$  we have 
\begin{align*}
  E_{2 + 7\cdot 36}
&\equiv 
  1 + 272q + 705 q^2 + \cdots
\\
&\equiv
  E_{36}^6 \cdot \big(
  E_4^8 E_6
  +
  669\, E_4^5 E_6 \Delta
  +
  162\, E_4^2 E_6 \Delta^2
  \big)
  \quad
  \pmod{37^2}
\tx{,}
\end{align*}
where~$E_{2+7\cdot36}$ appears on the left side as explained in the second remark above.
\end{mainexample}

Theorem~\ref{thm:main:eisenstein} treats all weights apart from $2$ and  those which are divisible by $p-1$.  
To provide a complete picture, we treat these exceptions in the next two results (which by comparison 
have  simpler proofs).
  Recall  that  \eqref{eq:g2_chen_kiming} provides a description of $E_2$ modulo~$p^2$.
  The next result shows that $E_2\pmod {p^2}$ is in fact determined by a modular form of weight~$p+1$.
\begin{theorem}
\label{thm:main:e2}
Let $p\geq 5$ be prime. Then there exists a $p$-integral modular form $f_{p+1} \in \rmM_{p+1}$ such that 
\begin{alignat*}{2}
  E_2 E_{p-1}
&\equiv
  f_{p+1} + p E_{p+1}^p
  \quad
  &&
  \pmod{p^2}
\tx{,}
\intertext{or equivalently}
E_2&\equiv
   E_{p-1}^{p-1}\, f_{p+1} + p E_{p+1}^{p}
  \quad
  &&
  \pmod{p^2}
\tx{.}
\end{alignat*}
\end{theorem}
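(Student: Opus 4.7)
The plan is to derive the congruence directly from the Ramanujan--Serre derivative applied to~$E_{p-1}$, together with a refined expansion of $E_{p-1}$ modulo~$p^2$, bypassing Theorem~\ref{thm:main:eisenstein} and~\eqref{eq:g2_chen_kiming}. By the definition of~$\partial$ in~\eqref{eq:pardef} we have $\partial E_{p-1} = 12\, \theta E_{p-1} - (p-1)\, E_2 E_{p-1}$, where $\theta = q\, d/dq$, so
\begin{gather*}
  E_2 E_{p-1} = \frac{12\, \theta E_{p-1} - \partial E_{p-1}}{p-1}\tx{.}
\end{gather*}
Since $\partial E_{p-1}/(p-1)$ is a $p$-integral modular form in~$\rmM_{p+1}$, the task reduces to showing
\begin{gather*}
  \frac{12\, \theta E_{p-1}}{p-1} \equiv -p E_{p+1} + p E_{p+1}^p \pmod{p^2}\tx{.}
\end{gather*}

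To analyse $\theta E_{p-1}$ modulo~$p^2$, I would use the von~Staudt--Clausen theorem, which gives $p B_{p-1} \equiv -1 \pmod{p}$ and unfolds (by geometric series) to $1/B_{p-1} \equiv -p \pmod{p^2}$; hence
\begin{gather*}
  E_{p-1} \equiv 1 - 2p \sum_{n \ge 1} \sigma_{p-2}(n)\, q^n \pmod{p^2}
  \quad \text{and} \quad
  12\, \theta E_{p-1} \equiv -24 p \sum_{n \ge 1} n\, \sigma_{p-2}(n)\, q^n \pmod{p^2}\tx{.}
\end{gather*}
The key lemma to establish is then the mod-$p$ identity
\begin{gather*}
  -24 \sum_{n \ge 1} n\, \sigma_{p-2}(n)\, q^n \equiv E_{p+1} - E_{p+1}^p \pmod{p}\tx{.}
\end{gather*}
The verification uses four ingredients: Fermat's little theorem, giving $n\, \sigma_{p-2}(n) \equiv \sigma_1(n) \pmod{p}$ when $p \nmid n$ (and $\equiv 0$ when $p \mid n$); Frobenius, i.e.~$E_{p+1}^p \equiv E_{p+1}(q^p) \pmod{p}$; the Kummer congruence $B_{p+1}/(p+1) \equiv 1/12 \pmod{p}$, implying $E_{p+1} \equiv 1 - 24 \sum \sigma_1(n) q^n \pmod{p}$; and the elementary identity $\sigma_1(pm) \equiv \sigma_1(m) \pmod{p}$. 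Combining these, both sides reduce to $-24 \sum_{p \nmid n} \sigma_1(n)\, q^n$ modulo~$p$.

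Combining the identity with $p/(p-1) \equiv -p \pmod{p^2}$ then yields
\begin{gather*}
  E_2 E_{p-1} \equiv -p E_{p+1} + p E_{p+1}^p - \frac{\partial E_{p-1}}{p-1} \pmod{p^2}\tx{,}
\end{gather*}
so the first congruence holds with $f_{p+1} \defeq -p E_{p+1} - \partial E_{p-1}/(p-1) \in \rmM_{p+1}$, which is manifestly $p$-integral of weight~$p+1$. The equivalent formulation follows by multiplying both sides by $E_{p-1}^{p-1}$ and invoking $E_{p-1}^{p} \equiv 1 \pmod{p^2}$ from~\eqref{eq:eisp2} together with $E_{p-1}^{p-1} \equiv 1 \pmod{p}$ to absorb the factor on the right. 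The main obstacle I expect is the arithmetic lemma above: it is a routine but somewhat fiddly coefficient-by-coefficient check, with no conceptual subtleties beyond the ingredients listed.
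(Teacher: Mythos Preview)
Your proposal is correct and is essentially the paper's own proof: the key lemma you isolate is exactly Lemma~\ref{lem:eptheta} (the paper routes through $\sigma_p$ rather than $\sigma_1$, but these agree modulo~$p$), and your choice $f_{p+1}=-pE_{p+1}-\partial E_{p-1}/(p-1)$ is congruent modulo~$p^2$ to the paper's $f_{p+1}=\partial E_{p-1}$ since $\partial E_{p-1}\equiv E_{p+1}\pmod p$. The only cosmetic difference is that the paper fixes $f_{p+1}$ first and then verifies the congruence, whereas you solve for $E_2E_{p-1}$ and read off $f_{p+1}$ at the end.
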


\begin{mainremarks}
\begin{enumerate}
\item The proof shows that we may take
$f_{p+1} =\partial E_{p-1}$ (compare with  \eqref{eq:parEpm1}).
\item By \eqref{eq:modpcong}, one can replace $E_2$ and $E_{p+1}$ by $G_2$ and $G_{p+1}$, respectively, in the statement.
\item 
The conclusion is equivalent to the congruence 
\begin{gather}
    E_2\equiv E_{p-1}^{2p-1} f_{p+1} + p E_{p-1}^{p-2}E_{p+1}^{p} \quad\pmod{p^2} \tx{,}
\end{gather}
where the right side is a modular form of weight~$2+2 p (p-1)$.
\item 
 Theorem~\ref{thm:main:e2} is a  corollary of \eqref{eq:g2_chen_kiming} and Theorem~\ref{thm:main:eisenstein} with $k_0 = 2$ and $n = p-1$. We will  give a   self-contained proof  in Section~\ref{sec:e2_congruences}.

\item
The Eisenstein series~$E_2$ appears 
naturally when differentiating modular forms (see \eqref{eq:thetadef}, \eqref{eq:pardef}).
For this reason, Theorem~\ref{thm:main:e2} 
is a tool to investigate the theta cycles of modular forms modulo~$p^2$;  this will be explored in a future project.
\end{enumerate}
\end{mainremarks}

\begin{mainexample}
As examples of Theorem~\ref{thm:main:e2} we have the following, which can easily be  proved by computation:
\begin{alignat*}{2}
  E_2E_{10} &\equiv -10\,E_4^3 + 15\,\Delta + 11\, E_{12}^{11} \quad&&\pmod{11^2}
\tx{,}
\\
  E_2E_{12} &\equiv -12\,E_{14} + 13\,E_{14}^{13} \quad&&\pmod{13^2}
\tx{,}
\\
  E_2E_{16} &\equiv -16\,E_4^3 E_6 + 219\,E_6 \Delta + 17\,E_{18}^{17} \quad&&\pmod{17^2}
\tx{.}
\end{alignat*}
\end{mainexample}

Finally we consider the case when the weight is divisible by $p-1$; here the result is particularly simple for any prime power modulus.
\begin{theorem}
\label{thm:main:epowers}
For any prime $p\geq 5$ and any positive integers $k$ and $r$ we have
\begin{gather}
  E_{rp^{k-1}(p-1)}
\equiv
  E_{p-1}^{rp^{k-1}}
  \quad
  \pmod{p^{k+1}}
\tx{.}
\end{gather}
\end{theorem}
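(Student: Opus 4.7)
Both sides are congruent to $1$ modulo $p^k$, and the plan is to compute each first-order correction explicitly modulo $p^{k+1}$ and match them.  Set $m \defeq rp^{k-1}(p-1)$.  Since $(p-1)\mid m$, von Staudt--Clausen gives $pB_m \equiv -1 \pmod{p}$, whence
\[
  -\frac{2m}{B_m} = 2mp\,\bigl(1 + pc_m + p^2c_m^2 + \cdots\bigr)
\]
for some $c_m \in \zp$.  Because $mp^2 = rp^{k+1}(p-1)$ is divisible by $p^{k+1}$, the higher-order terms drop out and
\[
  E_m \equiv 1 + 2mp\sum_{n \ge 1}\sigma_{m-1}(n)\,q^n \pmod{p^{k+1}}.
\]
In particular, specializing to $m = p-1$ (the case $k = r = 1$) shows that $h \defeq (E_{p-1}-1)/p$ lies in $\zp[\![q]\!]$ and satisfies $h \equiv 2(p-1)\sum_{n \ge 1}\sigma_{p-2}(n)\,q^n \pmod{p}$.

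For the right-hand side, write $E_{p-1} = 1+ph$.  A short induction on $j \ge 0$, using $p \mid \binom{p}{i}$ for $1 \le i \le p-1$, shows that $E_{p-1}^{p^j} \equiv 1 + p^{j+1}h \pmod{p^{j+2}}$.  Raising to the $r$-th power introduces only cross terms divisible by $p^{2k}$, so
\[
  E_{p-1}^{rp^{k-1}} \equiv 1 + rp^k h \pmod{p^{k+1}}.
\]
Substituting the mod-$p$ description of $h$ and observing that $2mp = 2r(p-1)p^k$, the two sides agree modulo $p^{k+1}$ provided $\sigma_{m-1}(n) \equiv \sigma_{p-2}(n) \pmod{p}$ for every $n \ge 1$, which is immediate from Fermat's little theorem since $m-1 \equiv -1 \equiv p-2 \pmod{p-1}$.

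No single step presents a serious obstacle; the main subtlety is the $p$-adic bookkeeping --- verifying that the higher-order terms in the von Staudt--Clausen expansion and in the binomial expansion of $(1+ph)^{rp^{k-1}}$ are all absorbed into $p^{k+1}$.  This ultimately rests on the twin divisibilities $p \mid pB_m + 1$ (for $(p-1)\mid m$) and $p \mid \binom{p}{i}$ (for $1 \le i \le p-1$), which together drive both computations.
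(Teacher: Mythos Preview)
Your argument is correct and follows essentially the same route as the paper: compute each side modulo $p^{k+1}$ by combining von Staudt--Clausen for the constant $-2m/B_m$ with Fermat's little theorem for the divisor sums, and control the binomial expansion of $(1+ph)^{rp^{k-1}}$. The only cosmetic difference is that the paper bounds $v_p\bigl(\binom{rp^{k-1}}{j}\bigr)$ directly for all $j\ge 2$, whereas you first prove $E_{p-1}^{p^j}\equiv 1+p^{j+1}h\pmod{p^{j+2}}$ by induction on $j$ and then raise to the $r$\nbd th power; both accomplish the same estimate.
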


In Section~\ref{sec:e2_congruences} 
we  prove
Theorems~\ref{thm:main:e2} and \ref{thm:main:epowers};
the arguments use   standard facts regarding modular forms and Bernoulli numbers.
The proof of Theorem~\ref{thm:main:eisenstein} in Section~\ref{sec:mainproof} is by contrast more difficult; here congruences involving Bernoulli numbers  and divisor sums will not suffice, since we have no information about the modular form $f_{k_0+(p-1)}$ that appears in the statement.
To prove the result, we instead leverage relationships among Eisenstein series and their derivatives due to Popa~\cite{popa} which arise from 
the Eichler-Shimura relations between period polynomials of modular forms. These  appear
when representing periods of modular forms via duality using Petersson scalar products (this is an insight going back to Rankin~\cite[(7.1)]{rankin-1952} and highlighted in the work of Kohnen--Zagier~\cite[p.~215]{kohnen-zagier-1984}). 
  A careful analysis of the $p$-adic properties of these relations
 allows us to use an inductive argument to prove Theorem~\ref{thm:main:eisenstein}.

\section{Background}
\label{sec:prelims}
We  recall some standard facts on Bernoulli numbers (see for example \S $9.5$ of \cite{Cohen}).  Let $p\geq 5$ be prime,  let $k, k',r$ be positive integers with $k, k'$ even, and let $v_p$ denote the $p$-adic valuation.
We have 
\begin{gather}\label{eq:adams} 
v_p\Big( \mfrac{B_k}k \Big)\geq 0\ \ \ \ \text{if $(p-1)\nmid k$}\text{,}
\end{gather}
and the Kummer congruences assert that if $(p-1)\nmid k$ and $k\equiv k' \pmod {p^{r-1}(p-1)}$, then 
\begin{gather}\label{eq:kummercong}
(1-p^{k-1})\mfrac{B_k}k\equiv
(1-p^{k'-1})\mfrac{B_{k'}}{k'} \quad\pmod
{p^r}.
\end{gather}
The Clausen-von Staudt theorem states that
\begin{gather}\label{eq:clausen}
B_k\equiv -\sum_{\substack{q \ \text{prime}\\ (q-1)\mid k}} \mfrac{1}{q} \quad\pmod{1}\text{,}
\end{gather}
from which it follows that 
\begin{gather} \label{eq:bkval}
v_p\Big( \mfrac{B_k}k \Big) =
-v_p(k)-1 \ \ \   \text{if $(p-1)\mid k$.}
\end{gather}
Moreover, if $(p - 1) \mid k$, then by \eqref{eq:clausen} we have 
\begin{gather}
pB_{k} \equiv-1 \quad\pmod{p}\text{,}
\end{gather}
from which it follows that 
\begin{gather}\label{eq:bkmodp2}
\mfrac{1}{B_{k}} \equiv -p \;\pmod{p^2} \ \ \ \ \text{if $(p-1)\mid k$.}
\end{gather}

We also recall the actions of the theta operator and the Ramanujan-Serre derivative on modular forms. If $f=\sum a(n)\,q^n\in \rmM_k$, then
\begin{gather}\label{eq:thetadef}
\theta f:=\sum na(n) \,q^n
\end{gather}
and
\begin{gather}\label{eq:pardef}
\partial f:=12\theta f-k E_2 f=12\theta f+24 k G_2 f\in \rmM_{k+2}.
\end{gather}
If $f\in \rmM_k$ is $p$-integral then it is well-known that there exists a $p$-integral $g\in \rmM_{k+p+1}$ such that $\theta f\equiv g \pmod p$.

\section{\texorpdfstring{Proof of Theorems~\ref{thm:main:e2} and \ref{thm:main:epowers}}{E2}}
\label{sec:e2_congruences}

These two results have relatively straightforward proofs
(as mentioned above, Theorem~\ref{thm:main:e2} is in fact a corollary of Theorem~\ref{thm:main:eisenstein} and \eqref{eq:g2_chen_kiming}). 
We treat them first since the arguments will be useful in the proof of the main theorem.
 
\begin{lemma}
 \label{lem:eptheta} For all primes $p\geq 5$ we have 
\begin{gather}\theta E_{p-1}\equiv
\mfrac{p}{12} \big(E_{p+1}-E_{p+1}^p \big) \quad\pmod{p^2}.
\end{gather} 
\end{lemma}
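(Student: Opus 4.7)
The plan is to reduce the statement to a congruence mod $p$, since $E_{p-1}\equiv 1\pmod p$ means $\theta E_{p-1}$ is automatically divisible by $p$. Write $E_{p-1}=1+pH$, where
\begin{gather*}
H = \frac{E_{p-1}-1}{p} = -\frac{2(p-1)}{p B_{p-1}}\sum_{n\geq 1}\sigma_{p-2}(n)\,q^n.
\end{gather*}
By \eqref{eq:bkmodp2}, $1/B_{p-1}\equiv -p\pmod{p^2}$, hence $1/(pB_{p-1})\equiv -1\pmod p$, so $H$ is $p$-integral and
\begin{gather*}
H \equiv -2\sum_{n\geq 1}\sigma_{p-2}(n)\,q^n\pmod p.
\end{gather*}
Since $\theta E_{p-1}=p\,\theta H$, it suffices to prove $\theta H\equiv \frac{1}{12}(E_{p+1}-E_{p+1}^p)\pmod p$.

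Next, I would expand the right-hand side mod $p$. From \eqref{eq:modpcong} (or directly from Kummer with $r=1$), $E_{p+1}\equiv E_2\pmod p$. For any $p$-integral $q$-expansion $f=\sum a_n q^n$ one has $f^p\equiv \sum a_n q^{np}=f(q^p)\pmod p$, and if $a\equiv b\pmod p$ then $a^p\equiv b^p\pmod{p^2}$; combining these gives $E_{p+1}^p\equiv E_2(q^p)\pmod p$. Therefore
\begin{gather*}
\frac{1}{12}\bigl(E_{p+1}-E_{p+1}^p\bigr)
\equiv \frac{1}{12}\bigl(E_2(q)-E_2(q^p)\bigr)
\equiv -2\sum_{n\geq 1}\Bigl(\sigma_1(n)-[p\mid n]\,\sigma_1(n/p)\Bigr)q^n \pmod p.
\end{gather*}

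The proof therefore reduces to the elementary divisor-sum identity
\begin{gather*}
n\,\sigma_{p-2}(n)\equiv \sigma_1(n)-[p\mid n]\,\sigma_1(n/p)\pmod p
\end{gather*}
for every $n\geq 1$. Writing $n\,\sigma_{p-2}(n)=\sum_{de=n}e\,d^{p-1}$ and using Fermat's little theorem ($d^{p-1}\equiv 1\pmod p$ if $p\nmid d$ and $d^{p-1}\equiv 0\pmod p$ otherwise), the left-hand side reduces mod $p$ to $\sum_{de=n,\,p\nmid d}e$. If $p\nmid n$ this is simply $\sigma_1(n)$; if $n=p^a m$ with $p\nmid m$, it equals $p^a\sigma_1(m)$, which matches $\sigma_1(n)-\sigma_1(n/p)$ by the standard multiplicative formula for $\sigma_1$. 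This verifies the identity and completes the proof.

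The only subtlety is the careful bookkeeping of $p$-factors (in particular the precise value of $1/B_{p-1}\pmod{p^2}$); there is no serious obstacle, and the core content is the divisor identity above.
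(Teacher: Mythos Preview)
Your proof is correct and follows essentially the same approach as the paper: both arguments use $-2(p-1)/B_{p-1}\equiv -2p\pmod{p^2}$ to reduce to a mod~$p$ statement, then verify an elementary divisor-sum identity via Fermat's little theorem and compare with the $q$-expansion of $E_{p+1}-E_{p+1}^p$ using Frobenius ($f^p\equiv f(q^p)\pmod p$). The only cosmetic difference is that the paper works directly with $\sigma_p(n)$ and $E_{p+1}$, whereas you detour through $\sigma_1(n)-[p\mid n]\,\sigma_1(n/p)$ and $E_2$ before invoking $E_{p+1}\equiv E_2\pmod p$; the underlying computation is the same.
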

\begin{proof}
Using \eqref{eq:bkmodp2} and the fact that 
$n \sigma_{p-2}(n) \equiv \sigma_{p}(n)  \pmod{p}$ when $p\nmid n$, we find that 
\begin{gather}\label{eq:hasse_init}
\theta E_{p-1}= -\frac{2(p-1)}{B_{p-1}}\sum_{n=1}^{\infty}n\sigma_{p-2}(n)q^n \equiv -2p\sum_{\substack{n=1\\p\,\nmid\, n
}}^{\infty}\sigma_{p}(n)q^n \quad\pmod{p^2}.
\end{gather}
By Fermat's little theorem, the fact that $\sigma_p(pn)\equiv \sigma_p(n) \pmod p$, and \eqref{eq:kummercong} 
we have
\begin{gather}
  \sum_{\substack{n=1\\p\,\nmid \, n }}^{\infty}\sigma_{p}(n)\, q^n
\equiv
  -\mfrac{B_2}4 \big(E_{p+1}-E_{p+1}^p \big) \quad\pmod p. 
\end{gather}
The lemma follows from the two displayed equations.
\end{proof}

\begin{proof}
[Proof of Theorem~\ref{thm:main:e2}]
Define
\begin{align*}
f_{p+1}:=\partial E_{p-1} = 12\theta E_{p-1}  - (p-1)E_{2}E_{p-1}\in \rmM_{p+1}.
\end{align*}
By Lemma~\ref{lem:eptheta} we have
\begin{align*}
f_{p+1} \equiv p\big(E_{p+1} - E_{p+1}^{p}\big) - (p-1)E_{2}E_{p-1} \quad\pmod{p^2}.
\end{align*}
We conclude from the last equation that 
\begin{gather}
    E_{2}E_{p-1} \equiv f_{p+1} + pE_{p+1}^{p} \quad\pmod{p^2}\text{,}
\end{gather}
and the theorem follows from observing that~$E_2 E_{p-1} \equiv E_{p+1} \;\pmod{p}$.
\end{proof}

\begin{proof}[Proof of Theorem \ref{thm:main:epowers}]
By \eqref{eq:bkmodp2} we have  
\begin{gather}\label{eq:bernfact}
\frac{rp^{k-1}(p - 1)}{B_{rp^{k-1}(p-1)}} \equiv rp^{k} \quad\pmod{p^{k+1}}
\end{gather}
for any positive integers $k$ and $r$,
from which we see that
\begin{gather}\label{eq:lhs}
\begin{aligned}
E_{rp^{k-1}(p - 1)} &\equiv 1 - 2rp^k\sum_{n=1}^{\infty} \sigma_{rp^{k-1}(p - 1)-1}(n)q^n\\
&\equiv 
 1 - 2rp^k\sum_{n=1}^{\infty} \sigma_{p-2}(n)q^n \quad\pmod{p^{k+1}}.
\end{aligned}
\end{gather}
On the other hand, we have
\begin{gather}\label{eq:sum_expansion}
  E_{p-1}^{rp^{k-1}}
=
  1 +
  \sum_{j=1}^{rp^{k-1}}
  \mbinom{rp^{k-1}}{j}
  \Big( -\mfrac{2(p-1)}{B_{p-1}} \Big)^j\,
  \Big( \sum_{r=1}^{\infty} \sigma_{p-2}(n)q^n \Big)^j
\tx{.}
\end{gather}
From \eqref{eq:bkmodp2} we have $-2(p-1)/B_{p-1}\equiv -2p \pmod{p^2}$.
If   $2\leq j\leq rp^{k-1}$, then a straightforward calculation  shows that
\begin{gather}
v_p\Big(\mbinom{rp^{k-1}}{j}\Big) =v_p(rp^{k-1})-v_p(j)\geq k-1-v_p(j)\geq k+1-j.
\end{gather}
It follows  that
\begin{gather}
E_{p-1}^{rp^{k-1}}
\equiv 1 - 2rp^k\sum_{n=1}^{\infty}\sigma_{p-2}(n)q^n \quad\pmod{p^{k+1}},
\end{gather}
which together with \eqref{eq:lhs} proves the theorem.
\end{proof}

\section{Proof of Theorem~\ref{thm:main:eisenstein}}
\label{sec:mainproof}

As described in the Introduction, the proof of Theorem~\ref{thm:main:eisenstein} depends on 
a dual version of the Eichler-Shimura relations which manifests itself as a family of 
relations  between  Eisenstein series and their derivatives \cite{popa}. We begin with some notation which will be used throughout the section.  
Let $p\geq 5$ be prime, let $k$ be a positive integer, and for   $1\leq m\leq 2k-3$ define 
\begin{gather}
      \tilde m:=2k-2-m
\end{gather}      
(the definition of $\mt$ depends on $k$,  whose value will be clear from context).
Define
\begin{gather}\label{eq:ckmdef}
c(k, m):=\mfrac k{(m+1)(\mt+1)}+(-1)^m\mfrac{m!\,\mt!}{2(2k-1)!},
\end{gather}
and for $1\leq j\leq 2k-3$ define
\begin{gather} H_j:=\begin{cases}
0\ \ \ &\text{if $j$ is even,}\\
G_{j+1}G_{\jt+1}\ \ \ &\text{if $j$ is odd.}
\end{cases}
\end{gather}
Define
\begin{gather} 
F_m:=\sum_{\substack{j=3 \\ \text{odd}}}^m 
\mbinom{m}{j} H_j.
\end{gather}
Here we change slightly the notation in \cite{popa} to  allow for a  convenient restatement of the formulas (some extra care must be taken in the translation when $m=1, 2k-3)$.
Rewriting equation (A.3) of \cite{popa} with our notation gives the key formula
\begin{gather}\label{eq:popa}
c(k, m)\, G_{2k}=\mfrac1{24}\partial G_{2k-2}+F_m-H_m+F_{\mt},\ \ \ \ 1\leq m\leq 2k-3\text{.}
\end{gather}

Since the proof of the theorem is somewhat technical, we start with an overview.  For fixed $k_0$, we prove the congruence for $G_{k_0+(n+1)(p-1)}$ by induction on $n$.  The statement is trivially true when $n=0$, so by \eqref{eq:Gp2} it suffices to prove the result for $1\leq n\leq p-1$ when $k_0=2$.
When $k_0\geq 4$ the statement is also trivially true (by \eqref{eq:Gp2} and \eqref{eq:eisp2}) when $n=p-1$, so in this case we need only provide a proof for   $1\leq n\leq p-2$.   The proofs  for $n\leq p-2$ and for $n=p-1$  appear in the two subsections below and  require different choices of parameters in \eqref{eq:popa}.  
In each case we let $k=(k_0+(n+1)(p-1))/2$ (so that $G_{2k}$ is the series of interest), and
we  break the right side of \eqref{eq:popa} into three terms which we analyze separately:
\begin{gather}\label{eq:popa2}
  c(k, m)\,G_{2k}
=
  \underbrace{\mfrac1{24} \partial G_{2k-2}}_A
  +
  \underbrace{\vphantom{\mfrac1{24}} F_m-H_m}_B
  +
  \underbrace{\vphantom{\mfrac1{24}} F_{\mt}}_C
.
\end{gather}
The proof involves careful consideration of the $p$-adic valuations of the quantities which arise on each side.

We introduce the following  (slight abuse of) notation 
which leads to convenience in the proof: when we write 
\begin{gather}
f\equiv p^a\F_k \quad\pmod{p^b}
\end{gather}
we mean that there exists a $p$-integral form $g\in \rmM_k$ such that 
\begin{gather}
f\equiv p^a g \quad\pmod{p^b}.
\end{gather}
In the proof we will encounter many constants.  For ease of notation  we will use $\lambda_1, \lambda_2$, $\lambda_1'$  etc. to denote elements of $\zp$.

\subsection{\texorpdfstring{Proof for $n\leq p-2$}{p1}}
Let $k_0$ be an even integer with $2\leq k_0\leq p-3$.
Assume that $1\leq n\leq p-2$  and  that the result holds with $n$ replaced by $\alpha$
for $0\leq \alpha\leq n-1$.
For the parameters in \eqref{eq:popa2} we choose
\begin{gather}\label{eq:param}
 k=\mfrac{k_0+(n+1)(p-1)}2; \ \ \ \ m=p-1; \ \ \ \ \mt=k_0-2+n(p-1).
\end{gather}
We first examine the coefficient $c(k, m)$ in \eqref{eq:ckmdef}.
We find that 
\begin{gather}\label{eq:cmk1}
\frac k{(m+1)(\mt+1)}=
\begin{cases}
\frac{k_0}{2p(k_0-1)}\ \ \ &\text{if $n=k_0-1$} ,\\
\frac{\lambda_1}{p}\ \ \ &\text{if $n\neq k_0-1$,}
\end{cases}
\end{gather}
where $\lambda_1\in \zp$ has $\lambda_1\equiv 1/2 \pmod p$.
A straightforward computation shows  that 
\begin{gather}\label{eq:cmk2}
(-1)^m\frac{m!\,\mt!}{2(2k-1)!}=\frac{\lambda_2}p,
\end{gather}
where $\lambda_2\in \zp$ has  
\begin{gather}
\lambda_2\equiv 
\begin{cases}
\frac1{2n} \;\pmod p\ \ \ &\text{if $n\geq k_0-1$}, \\
\frac1{2(n+1)} \;\pmod p\ \ \ &\text{if $n< k_0-1$}. 
\end{cases}
\end{gather}
From this we conclude that in every case (recall that $n<p-1$ and that $k_0\leq p-3$) we have 
\begin{gather}\label{eq:ckmval}
c(k, m)=\mfrac{\lambda_3}p, \quad  \lambda_3\in \zp, \quad \lambda_3\not\equiv 0 \;\pmod p.
\end{gather}

We now analyze the three terms on  the right side of \eqref{eq:popa2}, 
which involve many products of the form $G_{j+1}G_{\jt+1}$.  
We record a lemma for convenience.
\begin{lemma}\label{lem:gprod}
Let $p\geq 5$ be prime, let  $k_0$ be an even integer with $2\leq k_0\leq p-3$,  let $n\geq0$ be an integer, and 
let $k$ be as in \eqref{eq:param}.  Suppose that  $j$ is an odd integer with $3\leq j\leq 2k-3$ and $j\not\equiv -1, k_0-1\pmod {p-1}$.
Then the following are true:
\begin{enumerate}
    \item  $G_{j+1}G_{\jt+1}$ is $p$-integral.
    \item With the additional assumption that  $j\not\equiv 1\pmod {p-1}$ when $k_0=4$, we have
    \begin{gather} G_{j+1}G_{\jt+1}\equiv \F_{k_0+(p-1)} \quad\pmod p.
    \end{gather}
\end{enumerate}
\end{lemma}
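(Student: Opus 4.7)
The plan is to verify part~(1) directly from~\eqref{eq:gkint}, and to prove part~(2) by reducing each Eisenstein factor modulo~$p$ to a low-weight representative and then adjusting the total weight to~$k_0+(p-1)$ by multiplication by $E_{p-1}$ (using $E_{p-1}\equiv 1\pmod p$ from~\eqref{eq:ekone}) or, in one subcase, by substituting $G_{p+1}$ for $G_2$.

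For part~(1), since $j$ is odd, both $j+1$ and $\tilde j+1=2k-1-j$ are even. The hypothesis $j\not\equiv -1\pmod{p-1}$ gives $(p-1)\nmid(j+1)$, and since $2k\equiv k_0\pmod{p-1}$, the hypothesis $j\not\equiv k_0-1\pmod{p-1}$ gives $(p-1)\nmid(\tilde j+1)$. Hence $G_{j+1}$ and $G_{\tilde j+1}$ are both $p$-integral by~\eqref{eq:gkint}, and so is their product.

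For part~(2), let $a,b\in\{2,4,\dots,p-3\}$ be the unique even residues of $j+1$ and $\tilde j+1$ modulo~$p-1$; these exist by the same exclusions. By~\eqref{eq:modpcong} we have $G_{j+1}G_{\tilde j+1}\equiv G_aG_b\pmod p$ as $q$-series. Since $a+b\equiv k_0\pmod{p-1}$ and $4\le a+b\le 2p-6$, either $a+b=k_0+(p-1)$ or $a+b=k_0$. In the first case, $a=2$ would force $b=k_0+p-3>p-3$ (and $b=2$ is symmetric), so $a,b\ge 4$ and $G_aG_b\in\rmM_{k_0+(p-1)}$ is the required form. In the second case $k_0\ge 4$: if $a,b\ge 4$, then $E_{p-1}G_aG_b\in\rmM_{k_0+(p-1)}$ works; if $a=2$ (so $b=k_0-2$), the configuration $k_0=4$ is precisely the excluded subcase $a=b=2$, while for $k_0\ge 6$ we invoke $G_2\equiv G_{p+1}\pmod p$ (which follows from~\eqref{eq:parEpm1} together with the Kummer congruence $B_{p+1}/(p+1)\equiv B_2/2\pmod p$) and take $G_{p+1}G_{k_0-2}\in\rmM_{k_0+(p-1)}$. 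The subcase $b=2$ is symmetric.

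The main obstacle is the subcase in which one of the small residues equals~$2$, since $G_2$ is not a modular form. The exclusion $j\not\equiv 1\pmod{p-1}$ imposed when $k_0=4$ is exactly what prevents both factors from simultaneously reducing to $G_2$, which is the one configuration that cannot be rescued by the substitution $G_2\equiv G_{p+1}\pmod p$.
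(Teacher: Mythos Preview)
Your proof is correct and follows essentially the same idea as the paper's. The only organizational difference is that the paper reduces $G_{\tilde j+1}$ directly to the representative $G_{p+k_0-2-j^*}$ of weight $k_0+(p-1)-(j^*+1)$, so the product automatically lands in weight $k_0+(p-1)$ and the subcase $a+b=k_0$ (with its extra multiplication by $E_{p-1}$) never arises; your reduction of both factors to minimal residues $a,b$ and subsequent case split on $a+b\in\{k_0,\,k_0+(p-1)\}$ is a perfectly valid alternative bookkeeping.
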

\begin{proof}  The first assertion is   clear from \eqref{eq:adams}.
For the second, 
define $j^*:=j\pmod{p-1}$, where $1 \le j^* \le p-4$ by assumption. 
Suppose that  $j^*\neq 1$.   
Then $G_{j^*+1}\in \rmM_{j^*+1}$,  and by \eqref{eq:modpcong} we have $G_{j+1}\equiv G_{j^*+1}\pmod p$ and
\begin{gather}
G_{\jt+1}=G_{k_0-1+(n+1)(p-1)-j}\equiv G_{p+k_0-2- j^*} \quad\pmod p.
\end{gather}
The result follows since the last Eisenstein series has weight $\geq 4$.

If  $j^*=1$ then the assumptions ensure that $k_0\neq 2,4$.
We have $G_{j+1}\equiv G_{p+1}\pmod p$, and
writing $j=1+\alpha(p-1)$ with $\alpha\geq 1$ we have 
\begin{gather}
G_{\jt+1}=G_{k_0-2+(n+1-\alpha)(p-1)}\equiv G_{k_0-2} \quad\pmod p.
\end{gather}
The desired congruence follows since $k_0-2\geq 4$.
\end{proof}

We now examine the second term in   \eqref{eq:popa2}.
We have 
\begin{gather}
B=\sum_{\substack{j=3\\  \text{odd}}}^{p-2} \mbinom{p-1}{j} G_{j+1}G_{k_0-1+(n+1)(p-1)-j}.
\end{gather}
By  Lemma~\ref{lem:gprod} there are at most two terms in the sum which are not $p$-integral.  These occur when  
\begin{gather}
j\equiv -1 \;\pmod{p-1} \quad\text{or}\quad k_0-1-j\equiv 0 \;\pmod{p-1}.
\end{gather}
  In other words, they 
arise from  $j=p-2$ and $j=k_0-1$ (the latter does not occur when $k_0=2$).
We conclude that there are $\lambda_4, \lambda_5\in \zp$  such that
\begin{gather}\label{eq:B}
  B
=
  \lambda_4\, G_{p-1}G_{k_0+n(p-1)}
  +
  \delta_{k_0\neq2}\cdot \lambda_5\, G_{k_0}G_{(n+1)(p-1)}
  +
  f_1
\tx{,}
\end{gather}
where  $f_1\equiv \F_{k_0+(p-1)}\pmod p$ by Lemma~\ref{lem:gprod}. 

For the third term in  \eqref{eq:popa2} we have
\begin{gather}\label{eq:C}
C=\sum_{\substack{j=3\\  \text{odd}}}^{k_0-3+n(p-1)} \mbinom{k_0-2+n(p-1)}{j} G_{j+1}G_{k_0-1+(n+1)(p-1)-j}.
\end{gather}
We  isolate the terms that may fail to be $p$-integral.
The terms with $j\equiv -1\pmod{p-1}$ arise from $j=\alpha(p-1)-1$, $1\leq\alpha\leq n$.
These contribute
\begin{gather}\label{eq:mt1}
  \sum_{\alpha=1}^n
  \lambda_\alpha'\, G_{\alpha(p-1)}G_{k_0+(n+1-\alpha)(p-1)}
\tx{,}\quad
  \lambda_\alpha'\in \zp
\tx{.}
\end{gather}
The terms with $j\equiv k_0-1\pmod{p-1}$ arise from $j=k_0-1+\beta(p-1)$
for  $0\leq\beta\leq n-1$  (if $k_0=2$, then there is no $\beta=0$ term).
These contribute
\begin{gather}\label{eq:mt2}
  \sum_{\beta=0}^{n-1}
  \lambda_\beta''\, G_{k_0+\beta(p-1)}G_{(n+1-\beta)(p-1)}
\tx{,}\quad
  \lambda_\beta''\in \zp
\tx{,}
  \ \ \lambda_0''=0 \ \ \text{if $k_0=2$}.
\end{gather}

We now consider several cases due to subtleties which arise when $k_0=2,4$. 
Suppose first that $k_0\neq 4$.  In this case \eqref{eq:B}, Lemma~\ref{lem:gprod},
   \eqref{eq:mt1} and \eqref{eq:mt2} yield
\begin{gather}\label{eq:mt3}
B+C=\sum_{\alpha=0}^{n} \lambda_\alpha''' G_{k_0+\alpha(p-1)}G_{(n+1-\alpha)(p-1)}+f_2,
 \ \ \ \lambda_\alpha'''\in \zp, \ \ \lambda_0'''=0\ \ \text{if $k_0=2$},
\end{gather}
where
$f_2\in \rmM_{k_0+(n+1)(p-1)}$ has 
$f_2\equiv \F_{k_0+(p-1)}\pmod p$. 
Combining \eqref{eq:popa2}, \eqref{eq:ckmval},  and \eqref{eq:mt3} and multiplying through by $p$ we find that
\begin{gather}\label{eq:multp}
\lambda_3 G_{k_0+(n+1)(p-1)}=p A+p \sum_{\alpha=0}^{n} \lambda_\alpha'''\, G_{k_0+\alpha(p-1)}G_{(n+1-\alpha)(p-1)}+p f_2, 
\end{gather}
where $\lambda_0'''=0$ if $k_0=2$.
Using \eqref{eq:bkval}, the induction hypothesis and Theorem~\ref{thm:main:epowers}  
we see that for each $\alpha$ in the sum there exists $\lambda_\alpha^*\in \zp$ with
\begin{gather}\label{eq:indterms}
\begin{aligned}
  p G_{k_0+\alpha(p-1)}G_{(n+1-\alpha)(p-1)}
&=
  \lambda_\alpha^*\, G_{k_0+\alpha(p-1)}E_{(n+1-\alpha)(p-1)}
\\
&\equiv
  E_{p-1}^{\alpha-1}E_{p-1}^{n+1-\alpha}\, \F_{k_0+(p-1)}
\equiv
  E_{p-1}^n\F_{k_0+(p-1)} \quad\pmod{p^2}.
\end{aligned}
\end{gather}
If, in addition, we have $k_0\neq 2$, then 
\begin{gather}\label{eq:An24}
A\equiv\mfrac1{24}\partial G_{k_0-2}\equiv \F_{k_0+(p-1)} \quad\pmod p.
\end{gather}
Combining  \eqref{eq:multp}, \eqref{eq:indterms}, and \eqref{eq:An24} completes the 
induction for  $n\leq p-2$  when $k_0\neq 2,4$.

When $k_0=2$, we use \eqref{eq:bkval} and  Theorem~\ref{thm:main:epowers} to find that there exists $\lambda_6\in \zp$ such that
\begin{gather}
  pA
=
  \lambda_6\, \partial E_{(n+1)(p-1)}
\equiv
  \lambda_6\, \partial E_{p-1}^{n+1}
\equiv
  \lambda_6\, (n+1) E_{p-1}^n\partial E_{p-1}
\equiv
  \F_{p+1} \quad\pmod p,
\end{gather}
which completes the induction for $n\leq p-2$ in this case.
 
Finally, suppose that  $k_0=4$.
In this case there are additional terms in \eqref{eq:C} to which Lemma~\ref{lem:gprod} does not apply.  These   arise from  the indices $j$ with $j\equiv 1\pmod{p-1}$
and 
 combine to give 
\begin{gather}\label{eq:4bonus}
\sum_{\alpha=1}^n \mbinom{2+n(p-1)}{1+\alpha(p-1)} G_{2+\alpha(p-1)}G_{2+(n+1-\alpha)(p-1)}\equiv n G_{p+1}^2 \quad\pmod p.
\end{gather}
Here the  binomial sum congruence can be justified using 
 Lucas' Theorem (recalling that $n\leq p-2$) to see that 
\begin{gather}
\sum_{\alpha=1}^n \mbinom{2+n(p-1)}{1+\alpha(p-1)} \equiv  \mbinom{n-1}{1}\mbinom{p-n+2}{0}+\sum_{\alpha = 2}^{n} \mbinom{n-1}{\alpha-1}\mbinom{p-n+2}{p-\alpha+1}  \quad\pmod p.
\end{gather}
Since the second binomial coefficient in each summand is zero for $2 \leq \alpha \leq n-2$, we have 
\begin{gather}
\sum_{\alpha=1}^n \mbinom{2+n(p-1)}{1+\alpha(p-1)} \equiv  (n-1)+(n-1)+(2-n)\equiv n  \quad\pmod p.
\end{gather}

The  additional term \eqref{eq:4bonus}
can be canceled by extracting the corresponding quantity
from the first term in \eqref{eq:popa2}:
\begin{lemma}\label{lem:A4}
 If  $p\geq 5$ is prime and $n\geq 0$, then 
\begin{gather}
A=\mfrac1{24}\partial G_{2+(n+1)(p-1)}\equiv -n G_{p+1}^2 +\mfrac1{24} \partial G_{p+1} \quad\pmod p.
\end{gather}
\end{lemma}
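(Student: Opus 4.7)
The plan is to compare $\partial G_{2+(n+1)(p-1)}$ with $\partial G_{p+1}$ modulo~$p$. Since $2+(n+1)(p-1) \equiv 2 \not\equiv 0 \pmod{p-1}$, the congruence \eqref{eq:modpcong} yields $G_{2+(n+1)(p-1)} \equiv G_{p+1} \pmod{p}$, and because $\theta$ acts coefficient-wise on $q$-expansions, the same congruence is preserved after applying $\theta$. These two observations will make the $\theta$-contributions disappear when we subtract the two Ramanujan-Serre derivatives.

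I would then write out the definition \eqref{eq:pardef} of $\partial$ for both series and take the difference. The $\theta$-terms cancel modulo~$p$ by the preceding remark, and since the weights satisfy $2+(n+1)(p-1) \equiv 1-n$ and $p+1 \equiv 1 \pmod{p}$, the remainder collapses to
\begin{gather*}
\partial G_{2+(n+1)(p-1)} - \partial G_{p+1} \equiv -E_2\, G_{p+1}\bigl((1-n)-1\bigr) = n\, E_2\, G_{p+1} \pmod{p}.
\end{gather*}

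To convert the right-hand side into the required form, I would use \eqref{eq:parEpm1} to replace $E_2$ by $E_{p+1}$ modulo~$p$, and then apply the Kummer congruence \eqref{eq:kummercong} to the pair $(p+1,2)$, which gives $B_{p+1}/(p+1) \equiv B_2/2 = 1/12 \pmod{p}$. Since $G_{p+1} = -\tfrac{B_{p+1}}{2(p+1)}E_{p+1}$, this yields $G_{p+1} \equiv -E_{p+1}/24 \pmod{p}$, so $E_2\, G_{p+1} \equiv -24\, G_{p+1}^2 \pmod{p}$. Dividing the previous display by~$24$ then produces the claim.

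The argument is short and I do not foresee a serious obstacle; the only bookkeeping is confirming $p$-integrality of each Eisenstein series involved, which is automatic from \eqref{eq:gkint} since every relevant weight is $\equiv 2 \pmod{p-1}$ and $p \geq 5$.
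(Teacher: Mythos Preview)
Your proof is correct and follows essentially the same route as the paper: both expand $\partial$ via \eqref{eq:pardef}, use $G_{2+(n+1)(p-1)}\equiv G_{p+1}\pmod p$ to cancel the $\theta$-contributions, and reduce the weight factor modulo~$p$. The only difference is cosmetic: where the paper writes $G_2 G_{p+1}\equiv G_{p+1}^2$ directly (using $G_2\equiv G_{p+1}$, which you already invoked), you take a small detour through $E_2\equiv E_{p+1}$ and Kummer to reach the same conclusion.
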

\begin{proof} 
Using~\eqref{eq:pardef} and reducing modulo $p$ we obtain
\begin{gather}
A \equiv \mfrac{1}{2}\theta G_{2 + (n+1)(p-1)} + (1-n)G_{2}G_{2 + (n+1)(p-1)}  \quad\pmod{p}.
\end{gather}
The result follows, since $G_{2 + (n+1)(p-1)} \equiv G_2\equiv G_{p+1} \pmod{p}$.
\end{proof}

Using \eqref{eq:4bonus}, Lemma~\ref{lem:A4}, and the fact that 
$\partial G_{p+1}\in \rmM_{4+(p-1)}$ we find that the analogue of 
\eqref{eq:multp} when $k_0=4$ is 
\begin{gather}\label{eq:multp4}
\lambda_3 G_{4+(n+1)(p-1)}=p \sum_{\alpha=0}^{n} \lambda_\alpha''' G_{4+\alpha(p-1)}G_{(n+1-\alpha)(p-1)}+p f_3, 
\end{gather}
where $f_3\equiv \F_{4+(p-1)}\pmod p$.
Reducing modulo $p^2$ and arguing inductively as in \eqref{eq:indterms} gives the result in this case.
This completes the induction for $n\leq p-2$  (and so finishes the proof for $k_0\neq 2$).

\subsection{\texorpdfstring{Proof when $n=p-1$, $k_0=2$}{p2}}
The proof in this case follows the same strategy with a different choice of parameters in \eqref{eq:popa2} (which is necessary to obtain the analogue of \eqref{eq:ckmval}).  Here we choose
\begin{gather}\label{eq:param1}
 k=\mfrac{2+p(p-1)}2; \ \ \ \ m=p-2; \ \ \ \ \mt=p^2-2p+2.
\end{gather}
A straightforward computation using \eqref{eq:ckmdef} shows that the left side of \eqref{eq:popa2}
becomes
\begin{gather}\label{eq:left2}
\mfrac{\lambda_1}p G_{2+p(p-1)},\ \ \ \  \lambda_1\in \zp, \ \ \ \  \lambda_1\equiv-1 \;\pmod p.
\end{gather}
Using \eqref{eq:bkval} we find that
\begin{gather}\label{eq:A2}
A=\mfrac1{24}\partial G_{p^2-p}=\mfrac{\lambda_2}{p^2}\, \partial E_{p^2-p},  \ \ \ \  \lambda_2\in \zp.
\end{gather}
In this case we have 
\begin{gather}
B=F_{p-2}-H_{p-2}=\sum_{\substack{j=3\\  \text{odd}}}^{p-4} \mbinom{p-2}{j} G_{j+1}G_{1+p(p-1)-j},
\end{gather}
and by Lemma~\ref{lem:gprod} we find that
\begin{gather}\label{eq:B2}
B \equiv \F_{2+(p-1)} \quad\pmod p.
\end{gather}
We  have
\begin{gather}
C=\sum_{\substack{j=3\\  \text{odd}}}^{(p-1)^2+1} \mbinom{(p-1)^2+1}{j} G_{j+1}G_{1+p(p-1)-j},
\end{gather}
and arguing as in \eqref{eq:mt3} we find that
\begin{gather}\label{eq:C2}
B+C=\sum_{\alpha=1}^{p-1} \lambda_\alpha'\, G_{2+\alpha(p-1)}G_{(p-\alpha)(p-1)}+g_1,
 \ \ \ \lambda_\alpha'\in \zp,
\end{gather}
where 
$g_1\equiv \F_{2+(p-1)}\pmod p$.
Combining \eqref{eq:left2}, \eqref{eq:A2}, \eqref{eq:B2} and \eqref{eq:C2} and multiplying through by $p$ gives
\begin{gather}\label{eq:G22}
\lambda_1\, G_{2+p(p-1)}=\mfrac{\lambda_2}{p}\,\partial E_{p^2-p}+p \sum_{\alpha=1}^{p-1} \lambda_\alpha'\, G_{2+\alpha(p-1)}G_{(p-\alpha)(p-1)}+p g_2,
\end{gather}
where 
$g_2\equiv \F_{2+(p-1)}\pmod p$.
To determine the first term on the right modulo $p^2$ we use the following result.
\begin{lemma} \label{lem:partialE} 
If  $p\geq 5$ is prime then
\begin{gather} \partial E_{p^2-p}\equiv pE_{p-1}^{p-1}E_{p+1}+p^2\F_{p+1} \quad\pmod{p^3}.
\end{gather}
\end{lemma}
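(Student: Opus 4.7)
The plan is to use Theorem~\ref{thm:main:epowers} with $r=1$ and $k=2$ to replace $E_{p^2-p}$ with $E_{p-1}^p$ modulo $p^3$. Since $\theta$ preserves congruences of $q$-expansions modulo any power of $p$, it suffices to compute $\partial(E_{p-1}^p)\pmod{p^3}$, which by \eqref{eq:pardef} and the Leibniz rule equals
\begin{gather*}
\partial(E_{p-1}^p) = 12\,p\,E_{p-1}^{p-1}\,\theta E_{p-1} - p(p-1)\,E_2\,E_{p-1}^p\text{.}
\end{gather*}

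Next I would insert Lemma~\ref{lem:eptheta} (multiplied through by $p$, which shifts the congruence from mod $p^2$ to mod $p^3$) into the first summand, and Theorem~\ref{thm:main:e2} (after factoring $E_2E_{p-1}^p=(E_2E_{p-1})\cdot E_{p-1}^{p-1}$) into the second. This yields the mod-$p^3$ identity
\begin{gather*}
\partial(E_{p-1}^p) \equiv p^2\,E_{p-1}^{p-1}(E_{p+1}-E_{p+1}^p) - p(p-1)\,E_{p-1}^{p-1}(f_{p+1}+p\,E_{p+1}^p)\text{,}
\end{gather*}
where $f_{p+1}=\partial E_{p-1}\in\rmM_{p+1}$ is the form produced by Theorem~\ref{thm:main:e2}. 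A pleasant cancellation occurs: the $E_{p+1}^p$ contributions aggregate with coefficient $-p^2-p^2(p-1)=-p^3$ and hence vanish modulo $p^3$.

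To finish, I would use the congruence $f_{p+1}\equiv E_{p+1}\pmod p$ (noted after Theorem~\ref{thm:main:e2}) to write $f_{p+1}=E_{p+1}+p\,\F_{p+1}$ with $\F_{p+1}\in\rmM_{p+1}$ (the quotient $(f_{p+1}-E_{p+1})/p$ is $p$-integral since $\rmM_{p+1}$ is a $\zp$-module). Expanding $-p(p-1)=p-p^2$ in the remaining expression $p^2 E_{p-1}^{p-1}E_{p+1}-p(p-1)E_{p-1}^{p-1}f_{p+1}$, the apparent $p^2$ coefficient of $E_{p-1}^{p-1}E_{p+1}$ collapses to exactly $p$; finally, since $E_{p-1}^{p-1}\equiv 1\pmod p$, the residual $p^2$-term may be absorbed into the $\F_{p+1}$-notation modulo $p^3$, producing the claimed congruence.

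The main obstacle is the delicate bookkeeping of $p$-adic orders. Neither Lemma~\ref{lem:eptheta} alone (which only contributes at order $p^2$) nor Theorem~\ref{thm:main:e2} alone extracts the correct leading coefficient; the order-$p$ term $p\,E_{p-1}^{p-1}E_{p+1}$ emerges only after combining both inputs and then invoking the further order-$p$ congruence $f_{p+1}\equiv E_{p+1}\pmod p$. Miscounting any of these three contributions by a single power of $p$ would produce the wrong final coefficient.
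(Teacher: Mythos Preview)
Your argument is correct, but it takes a detour that the paper avoids. After reducing to $\partial(E_{p-1}^p)$ via Theorem~\ref{thm:main:epowers} (as you do), the paper simply observes the exact identity
\[
  \partial(E_{p-1}^p) \;=\; p\,E_{p-1}^{p-1}\,\partial E_{p-1},
\]
which holds because $\partial$ satisfies a Leibniz rule compatible with the weight grading: for $f$ of weight $k$ one has $\theta(f^n)=n f^{n-1}\theta f$ and $(nk)E_2 f^n = n f^{n-1}(kE_2 f)$, hence $\partial(f^n)=n f^{n-1}\partial f$. From there the only input needed is the mod-$p$ congruence $\partial E_{p-1}\equiv E_{p+1}$ of~\eqref{eq:parEpm1}, which immediately gives $\partial E_{p-1}\equiv E_{p+1}+p\,\F_{p+1}\pmod{p^2}$ and hence the lemma.

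Your route---splitting $\partial$ into its $\theta$ and $E_2$ constituents and then feeding in Lemma~\ref{lem:eptheta} and Theorem~\ref{thm:main:e2}---is really this same identity in disguise: your expression $12p\,E_{p-1}^{p-1}\theta E_{p-1}-p(p-1)E_2E_{p-1}^p$ equals $p\,E_{p-1}^{p-1}\partial E_{p-1}$ on the nose. The ``pleasant cancellation'' of the $E_{p+1}^p$ terms and the subsequent recombination via $f_{p+1}\equiv E_{p+1}\pmod p$ are artifacts of having broken $\partial E_{p-1}$ apart and then reassembled it through two lemmas (the second of which was itself proved from the first). Recognizing the derivation property of $\partial$ up front eliminates the need for either Lemma~\ref{lem:eptheta} or Theorem~\ref{thm:main:e2} here and reduces the bookkeeping to a single mod-$p$ step.
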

\begin{proof} By Theorem~\ref{thm:main:epowers} we have 
\begin{gather} \partial E_{p^2-p}\equiv \partial E_{p-1}^p\equiv p E_{p-1}^{p-1}\partial E_{p-1} \quad\pmod{p^3}.
\end{gather}
Here $\partial E_{p-1}\in \rmM_{p+1}$ has  $\partial E_{p-1}\equiv E_{p+1}\pmod p$; in 
other words,
\begin{gather}
\partial E_{p-1}\equiv E_{p+1}+p \F_{p+1} \quad\pmod{p^2}.  
\end{gather}
The Lemma follows.
\end{proof}
From Lemma~\ref{lem:partialE}  we have 
\begin{gather}\mfrac{\lambda_2}{p}\partial E_{p^2-p}\equiv 
E_{p-1}^{p-1}E_{p+1}+p\F_{p+1} \quad\pmod{p^2}.
\end{gather}
For each $\alpha$ appearing in \eqref{eq:G22}, the results of the last section and Theorem~\ref{thm:main:epowers}  show that for some $\lambda_\alpha^*\in \zp$ we have
\begin{gather}\label{eq:indterms2}
\begin{aligned}
  p G_{2+\alpha(p-1)}G_{(p-\alpha)(p-1)}
&=
  \lambda_\alpha^*\, G_{2+\alpha(p-1)}E_{(p-\alpha)(p-1)}
\\
&\equiv
  E_{p-1}^{\alpha-1}E_{p-1}^{p-\alpha}\F_{2+(p-1)}
\equiv
  E_{p-1}^{p-1}\F_{2+(p-1)} \quad\pmod{p^2}.
\end{aligned}
\end{gather}
Using the last two  facts, we see from \eqref{eq:G22} that
\begin{gather}
  \lambda_1\, G_{2+p(p-1)}
\equiv
  E_{p-1}^{p-1}\F_{2+(p-1)} \quad\pmod{p^2}.
\end{gather}
This establishes the result when $k_0=2$ and $n=p-1$ and so completes the proof of Theorem~\ref{thm:main:eisenstein}.

\printbibliography

\end{document}